\documentclass{amsart}

\usepackage{amsfonts}
\usepackage{amssymb}

\parskip 2pt

\sloppy

\newcommand{\rn}{\mathbb{R}^N}
\newcommand{\rd}{\textrm{d}}
\renewcommand{\O}{\Omega}
\renewcommand{\H}{\mathcal{H}}
\newcommand{\lqb}{L^{q}(\partial\Omega)}

\newtheorem{de}{Definition}[section]
\newtheorem{lem}[de]{Lemma}
\newtheorem{te}[de]{Theorem}

\newtheorem{re}[de]{Remark}

\title[Remarks on an optimization problem]{Remarks on an optimization problem for the $p-$Laplacian}
\author[L. Del Pezzo and J. Fern\'andez Bonder]
{Leandro M. Del Pezzo and Juli\'an Fern\'andez Bonder}

\address{Leandro M. Del Pezzo \hfill\break\indent
Departamento  de Matem\'atica, FCEyN, Universidad de Buenos Aires,
\hfill\break\indent Pabell\'on I, Ciudad Universitaria (1428),
Buenos Aires, Argentina.}

\email{{\tt ldpezzo@dm.uba.ar}}

\address{Juli\'an Fern\'andez Bonder \hfill\break\indent
Departamento  de Matem\'atica, FCEyN, Universidad de Buenos Aires,
\hfill\break\indent Pabell\'on I, Ciudad Universitaria (1428),
Buenos Aires, Argentina.}

\email{{\tt jfbonder@dm.uba.ar}\hfill\break\indent {\it Web page:}
{\tt http://mate.dm.uba.ar/$\sim$jfbonder}}

\thanks{Supported by Universidad de Buenos Aires under grant X078,
by ANPCyT PICT No. 2006-290 and CONICET (Argentina) PIP 5478/1438.
J. Fern\'andez Bonder is a member of CONICET. Leandro Del Pezzo is a fellow of CONICET}

\begin{document}

\begin{abstract}
In this note we give some remarks and improvements on a recent paper of us \cite{nos} about an optimization problem for the $p-$Laplace operator that were motivated by some discussion the authors had with Prof. Cianchi.
\end{abstract}

\maketitle

\section{Introduction}

In this note, we want to give some remarks and improvements on a recent paper of us \cite{nos} about an optimization problem for the $p-$Laplace operator.

These remarks were motivated by some discussion the authors had with Prof. Cianchi and we are grateful to him.

Let us recall the problem analyzed in \cite{nos}.

Given a domain $\O\subset\rn$ (bounded, connected, with smooth
boundary) and some class of admissibel loads $\mathcal{A}$, in \cite{nos} we studied the following problem:
$$
\mathcal{J}(f):=\int_{\partial\Omega}f(x)u_f\,\rd\H^{N-1} \to \mbox{max}
$$
for $f\in \mathcal{A},$ where $\H^d$ denotes the $d-$dimensional Hausdorff measure and $u$ is the (unique) solution to
the nonlinear problem with load $f$
\begin{equation}\label{bati}
\begin{cases}
-\Delta_p u + |u|^{p-2} u = 0 & \textrm{in }  \O,\\
|\nabla u|^{p-2}\frac{\partial u}{\partial\nu}= f & \textrm{on }
\partial\O.
\end{cases}
\end{equation}
Where $p\in(1,\infty)$, $\Delta_p u = \mbox{div}(|\nabla u|^{p-2}\nabla u)$ is
the usual $p-$Laplacian, $\frac{\partial }{\partial\nu}$ is the outer normal
derivative and $f\in L^{q}(\partial \O)$ with $q>\frac{p'}{N'}$ .

In \cite{nos}, we worked with three different classes of admissible functions $\mathcal{A}$

\begin{itemize}
	\item The class of rearrangements of a given function $f_0.$
	\item The (unit) ball in some $L^q.$
	\item The class of characteristic functions of sets of given measure.	
\end{itemize}

For each of these classes, we proved existence of a maximizing load (in the respective class) and analyzed properties of
these maximizer.

When we worked in the unit ball of $L^{q},$ we explicitly found the (unique) maximizar for $\mathcal{J},$
namely, the first eigenfunction of a Steklov-like nonlinear eigenvalue problem.

Whereas when we worked with the class of characteristic functions of set of given boundary measure, 
besides to prove that there exists a maximizer function we could give a characterization of set where the maximizer 
function is supported. Moreover, in order to analyze properties of this maximizer, we computed the first variation 
with respect respect to perturbations on the set where the characteristic function was supported. See \cite{nos} (section 5).

The aim of this work is to generalize the results obtained for the class of characteristic functions of set of given 
boundary measure to the class of rearrangements function of a given function $f_0.$

Recall that if $f_0$ is a characteristic function of a set of $\H^{N-1}$-measure $\alpha$, then {\em every} characteristic function of a set of $\H^{N-1}$-measure $\alpha$ is a rearrangement of $f_0$.

\section{Characterization of Maximizer Function}

In this section we give characterization of the maximizer function relative to the class of rearrangements of a given 
function $f_0.$

We begin by observe that \eqref{bati}  has a unique weak solution $u_{f}$, for which the following equations hold
\begin{equation}\label{rojas}
\int_{\partial\O}fu_{f} \, \rd \H^{N-1} = \sup_{u\in W^{1,p}(\Omega)} \mathcal{I}(u),
\end{equation}
where
$$
\mathcal{I}(u):=\frac{1}{p-1}\Big\{ p\int_{\partial\O} f u \, \rd \H^{N-1} - \int_\O
|\nabla u|^p + |u|^p \, \rd \H^N \Big\}.
$$

Let $f_0\in\lqb$, with $q=p/(p-1),$ and let $\mathcal{R}_{f_0}$ be the class of
rearrangements of $f_0$. We was interested in finding
\begin{equation}\label{cani}
\sup_{f\in \mathcal{R}_{f_0}} \int_{\partial\O} fu_{f} \, \rd \H^{N-1}.
\end{equation}

In \cite{nos}, Theorem 3.1, we could proof that there exists $\hat{f}\in\mathcal{R}_{f_0}$ such that
$$
\int_{\partial\Omega}\hat{f}\hat{u} \, \textrm{d}\mathcal{H}^{N-1}=\sup_{f\in\mathcal{R}_{f_0}}\int_{\partial\Omega}
fu_{f}\,\textrm{d} \mathcal{H}^{N-1}.
$$
where $\hat{u}=u_{\hat{f}}.$

We begin by giving a characterization of this maximizer $\hat{f}$ in the spirit of \cite{cuccu}.

\begin{te}\label{teo1}
$\hat{f}$ is the unique maximizer of linear functional $L(f):=\int_{\partial\Omega}f\hat{u} \,\rd \mathcal{H}^{N-1},$ relative to $f\in \mathcal{R}_{f_0}.$ Therefore, there is an increasing function $\phi$ such that $\hat{f}=\phi\circ\hat{u}$ 
$\mathcal{H}^{N-1}-$a.e.
\end{te}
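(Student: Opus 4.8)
The plan is to exploit the variational characterization \eqref{rojas}–\eqref{cani} to show that $\hat f$ maximizes the *linear* functional $L(f) = \int_{\partial\Omega} f\hat u\,\rd\H^{N-1}$ over $\mathcal{R}_{f_0}$, and then invoke the standard structure theory for such linear maximization problems over rearrangement classes (as in \cite{cuccu}) to produce the increasing function $\phi$ with $\hat f = \phi\circ\hat u$. First I would record that, by \eqref{rojas} applied with $f=\hat f$, we have $\int_{\partial\Omega}\hat f\hat u\,\rd\H^{N-1} = \sup_{u\in\wp}\mathcal{I}(u)$ with the sup attained at $\hat u$. Now take any competitor $g\in\mathcal{R}_{f_0}$ with solution $u_g$. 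Using \eqref{rojas} for $g$, test the functional $\mathcal{I}$ associated to $g$ at the particular function $\hat u$: this yields
$$
\int_{\partial\Omega} g u_g\,\rd\H^{N-1} \;\geq\; \mathcal{I}_g(\hat u) \;=\;\frac{1}{p-1}\Big\{ p\int_{\partial\Omega} g\hat u\,\rd\H^{N-1} - \int_\Omega |\nabla\hat u|^p + |\hat u|^p\,\rd\H^N\Big\}.
$$
Since $\hat f$ maximizes \eqref{cani}, the left-hand side is $\le \int_{\partial\Omega}\hat f\hat u\,\rd\H^{N-1}$; and the constant term $\int_\Omega|\nabla\hat u|^p+|\hat u|^p\,\rd\H^N$ is exactly $p\int_{\partial\Omega}\hat f\hat u\,\rd\H^{N-1} - (p-1)\int_{\partial\Omega}\hat f\hat u\,\rd\H^{N-1} = \int_{\partial\Omega}\hat f\hat u\,\rd\H^{N-1}$ by the weak formulation of \eqref{bati} with $u=\hat u$ as test function (the energy identity). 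Substituting, the inequality collapses to $\int_{\partial\Omega} g\hat u\,\rd\H^{N-1}\le\int_{\partial\Omega}\hat f\hat u\,\rd\H^{N-1}$, i.e. $L(g)\le L(\hat f)$ for every $g\in\mathcal{R}_{f_0}$. This proves $\hat f$ is a maximizer of $L$ over $\mathcal{R}_{f_0}$.

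For uniqueness and the representation $\hat f=\phi\circ\hat u$, I would appeal to the known facts about linear functionals on rearrangement classes (Burton's theory, as used in \cite{cuccu}): if $w\in L^{p}(\partial\Omega)$ and one maximizes $f\mapsto\int_{\partial\Omega} fw\,\rd\H^{N-1}$ over the rearrangement class $\mathcal{R}_{f_0}$, then a maximizer exists, it is of the form $\phi\circ w$ for some increasing $\phi$, and — provided $w$ has no level set of positive $\H^{N-1}$-measure on which $f_0$ is non-constant, equivalently $\H^{N-1}(\{w=c\})=0$ for all $c$ in the relevant range unless $f_0$ is a.e. constant there — the maximizer is unique. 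So the remaining analytic point is to verify this non-degeneracy for $w=\hat u$: one must show $\H^{N-1}\big(\{x\in\partial\Omega:\hat u(x)=c\}\big)=0$ for every constant $c$ (or handle the exceptional constant-on-a-level-set case as in \cite{cuccu}). I expect this to be the main obstacle, and I would handle it via regularity of $\hat u$ up to the boundary together with the equation: on a level set $\{\hat u=c\}\cap\partial\Omega$ of positive measure, the Neumann condition $|\nabla\hat u|^{p-2}\partial\hat u/\partial\nu=\hat f$ combined with the strong maximum principle / Hopf-type lemma for the $p$-Laplacian forces a contradiction unless $\hat f$ is constant there, which by the rearrangement structure reduces to the trivial case $f_0$ constant.

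Finally I would assemble these pieces: the variational argument gives that $\hat f$ solves the linear problem $\max_{f\in\mathcal{R}_{f_0}} L(f)$; the structure theorem gives existence of an increasing $\phi$ with $\hat f=\phi\circ\hat u$ $\H^{N-1}$-a.e.; and the non-degeneracy of the level sets of $\hat u$ upgrades this to uniqueness of the maximizer of $L$. One subtlety worth flagging is that the rearrangement class is taken in $\lqb$ with $q=p'=p/(p-1)$, so the pairing $\int_{\partial\Omega} f\hat u\,\rd\H^{N-1}$ is well-defined precisely because $\hat u|_{\partial\Omega}\in L^{p}(\partial\Omega)$ by the trace theorem — this integrability must be checked to make $L$ a bona fide bounded linear functional before the structure theory applies. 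The rest is routine.
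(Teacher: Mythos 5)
Your Step 1 (that $\hat f$ maximizes $L$ over $\mathcal{R}_{f_0}$) is exactly the paper's argument: compare $\mathcal{I}_g$ evaluated at $\hat u$ with its supremum, use the energy identity $\int_\Omega |\nabla\hat u|^p+|\hat u|^p\,\rd\H^N=\int_{\partial\Omega}\hat f\hat u\,\rd\H^{N-1}$ to cancel the gradient terms, and conclude $L(g)\le L(\hat f)$. That part is fine.

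The gap is in your treatment of uniqueness and of the representation $\hat f=\phi\circ\hat u$. You propose to first obtain the representation from Burton's theory and then upgrade to uniqueness by verifying a non-degeneracy condition, namely $\H^{N-1}(\{\hat u=c\}\cap\partial\Omega)=0$ for every $c$. This is the wrong direction and the non-degeneracy step would fail: a level set of $\hat u$ of positive surface measure is not excluded by the equation. On such a set only the \emph{tangential} gradient of the trace vanishes a.e., while the Neumann condition $|\nabla\hat u|^{p-2}\partial_\nu\hat u=\hat f$ constrains only the normal derivative, so no Hopf-type contradiction arises; you have identified this as ``the main obstacle'' but the sketch you give does not close it. The paper avoids the issue entirely by proving uniqueness \emph{directly from the PDE} and only then invoking Burton's theorem in the direction ``unique maximizer $\Rightarrow$ increasing rearrangement'': if $g\in\mathcal{R}_{f_0}$ is another maximizer of $L$, then $L(g)=L(\hat f)$ combined with the chain of inequalities from Step 1 forces $\mathcal{I}_g(\hat u)=\sup_{u\in\wp}\mathcal{I}_g(u)$, so by uniqueness of the maximizer of $\mathcal{I}_g$ (equivalently, of the weak solution of \eqref{bati} with load $g$) one gets $u_g=\hat u$; but then $\hat u$ solves the Neumann problem with both data $g$ and $\hat f$, whence $g=\hat f$ $\H^{N-1}$-a.e. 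With uniqueness in hand, the cited form of Burton's theorem (Theorem \ref{burton}) delivers the increasing $\phi$ with $\hat f=\phi\circ\hat u$ with no hypothesis on the level sets of $\hat u$. You should replace your level-set argument with this equality-case analysis of the variational inequality.
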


\begin{proof} We proceed in three steps.

\medskip

{\em Step 1.} First we show that $\hat{f}$ is a maximizer of $L(f)$ relative to $f\in \mathcal{R}_{f_0}.$

\smallskip

In fact, let $h\in \mathcal{R}_{f_0}$, since 
$\int_{\partial\Omega}\hat{f}\hat{u} \, \textrm{d}\mathcal{H}^{N-1}=\sup_{f\in\mathcal{R}_{f_0}}\int_{\partial\Omega}
fu_{f}\,\textrm{d} \mathcal{H}^{N-1},$
we have that
\begin{align*}
\int_{\partial\Omega}\hat{f}\hat{u} \, \textrm{d}\mathcal{H}^{N-1}&\ge\int_{\partial\Omega}
hu_{h}\,\textrm{d} \mathcal{H}^{N-1}\\
&= \sup_{u\in W^{1,p}(\Omega)}\frac{1}{p-1}\left\{p\int_{\partial\Omega}hu \, \rd \mathcal{H}^{N-1} - \int_{\partial\Omega}|\nabla u|^p 
+ |u|^p \, \rd \H^N\right\}\\
&\ge\frac{1}{p-1}\left\{p\int_{\partial\Omega}h\hat{u} \, \rd \mathcal{H}^{N-1} - \int_{\partial\Omega}|\nabla \hat{u}|^p 
+ |\hat{u}|^p \, \rd \H^N\right\},
\end{align*}
and, since
$$
\int_{\partial\Omega}\hat{f}\hat{u}\,\rd \mathcal{H}^{N-1}=\frac{1}{p-1}\left\{p\int_{\partial\Omega}\hat{f}\hat{u} \, \rd \mathcal{H}^{N-1} -
\int_{\partial\Omega}|\nabla \hat{u}|^p + |\hat{u}|^p \, 
\rd \H^N\right\},
$$
we have
$$
\int_{\partial\Omega}\hat{f}\hat{u} \,\rd \mathcal{H}^{N-1} \ge\int_{\partial\Omega}h\hat{u} \,\rd \mathcal{H}^{N-1}.
$$
Therefore,
$$
\int_{\partial\Omega}\hat{f}\hat{u} \,\rd \mathcal{H}^{N-1}=\sup_{f\in \mathcal{R}_{f_0}} L(f). 
$$

\medskip

{\em Step 2.} Now, we show that $\hat{f}$ is the unique maximizer of $L(f)$ relative to $f\in \mathcal{R}_{f_0}.$

\smallskip

We suppose that $g$ is another maximizer of $L(f)$ relative to $f \in \mathcal{R}_{f_0}$. Then
$$
\int_{\partial\Omega}\hat{f}\hat{u} \,\rd \mathcal{H}^{N-1}=\int_{\partial\Omega}g\hat{u} \,\rd \mathcal{H}^{N-1}.
$$
Thus
\begin{align*}
\int_{\partial\Omega}g\hat{u} \,\rd \mathcal{H}^{N-1}&=\int_{\partial\Omega}\hat{f}\hat{u} \,\rd\mathcal{H}^{N-1}\\
&\ge\int_{\partial\Omega}gu_g \,\rd \mathcal{H}^{N-1}\\
&=\sup_{u\in W^{1,p}(\Omega)}\frac{1}{p-1}\left\{p\int_{\partial\Omega}gu\, \rd
\mathcal{H}^{N-1}-\int_{\partial\Omega}|\nabla u|^p + |u|^p \, \rd \mathcal{H}^N\right\}.
\end{align*}

\noindent On the other hand,
\begin{align*}
\int_{\partial\Omega}g\hat{u} \,\rd \mathcal{H}^{N-1}&=\int_{\partial\Omega}\hat{f}\hat{u} \,\rd\mathcal{H}^{N-1}\\
&=\frac{1}{p-1}\left\{p\int_{\partial\Omega}\hat{f}\hat{u}\, \rd
\mathcal{H}^{N-1}-\int_{\partial\Omega}|\nabla \hat{u}|^p + |\hat{u}|^p \, \rd \mathcal{H}^N\right\}\\
&=\frac{1}{p-1}\left\{p\int_{\partial\Omega}g\hat{u}\, \rd
\mathcal{H}^{N-1}-\int_{\partial\Omega}|\nabla \hat{u}|^p + |\hat{u}|^p \, \rd \mathcal{H}^N\right\}.
\end{align*}
Then
$$
\int_{\partial\Omega}g\hat{u} \,\rd \mathcal{H}^{N-1}=\sup_{u\in W^{1,p}(\Omega)}\frac{1}{p-1}\left\{p\int_{\partial\Omega}gu\, \rd
\mathcal{H}^{N-1}-\int_{\partial\Omega}|\nabla u|^p + |u|^p \, \rd \mathcal{H}^N\right\}.
$$

\noindent Therefore $\hat{u}=u_g.$ Then $\hat{u}$ is the unique weak solution to 
$$
\begin{cases}
\Delta_p \hat{u} + |\hat{u}|^{p-2}\hat{u}=0 &\textrm{in } \Omega,\\
|\nabla \hat{u}|^{p-2}\frac{\partial \hat{u}}{\partial \nu}=g &\textrm{on } \partial\Omega.
\end{cases}
$$
Furthermore, we now that u is the unique weak solution to
$$
\begin{cases}
\Delta_p \hat{u} + |\hat{u}|^{p-2}\hat{u}=0 &\textrm{in } \Omega,\\
|\nabla \hat{u}|^{p-2}\frac{\partial \hat{u}}{\partial \nu}=\hat{f} &\textrm{on } \partial\Omega.
\end{cases}
$$
Therefor $\hat{f}=g$ $\mathcal{H}^{N-1}-$a.e.

\medskip

{\em Step 3.} Finally, we have that there is an increasing function $\phi$ such that 
$\hat{f}=\phi\circ\hat{u}$  $\mathcal{H}^{N-1}-$a.e.

\smallskip

This is a direct consequence of Steps 1, 2 and Theorem \ref{burton} below.

\medskip

This completes the proof of Theorem \ref{teo1}.
\end{proof}

In order to state Theorem \ref{burton}, we need the following definition

\begin{de}
The measure space $(X,\mathcal{M},\mu)$ is called nonatomic if for $U\in \mathcal{M}$ with $\mu(U)>0$, there exists $V\in \mathcal{M}$ with $V \subset U$ and $0<\mu(V)<\mu(U).$ The measure space $(X,\mathcal{M},\mu)$ is called separable if there is a sequence $\{U_n\}_{n=1}^{\infty}$ of measurable sets such that for every $V\in \mathcal{M}$ and $\varepsilon>0$ there exists n such that
$$
\mu(V\setminus U_n)+\mu(U_n\setminus V) <\varepsilon.
$$
\end{de}

\begin{te}[See \cite{b1}]
\label{burton}
Let $(X,\mathcal{M},\mu)$ be a finite separable nonatomic measure space, let $1\le p \le\infty,$ let $q$ be the conjugate exponent of $p,$ let $f_0\in L^p(X,\mu)$ and $g\in L^q(X,\mu)$ and let $R_{f_0}$ be the set of rearrangements of $f_0$ on $X.$ If $L(f)=\int_{X}fg \,\rd\mu$ has a unique maximizer $\hat{f}$ relative to $\mathcal{R}_{f_0}$ there is an increasing function $\phi$ such that $f^*=\phi\circ g$ $\mu-$a.e.
\end{te}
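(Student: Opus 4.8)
The plan is to follow Burton's argument in \cite{b1}. First I would use that $(X,\mathcal{M},\mu)$ is finite, separable and nonatomic to assume $X=[0,m]$ with Lebesgue measure, $m=\mu(X)$; this makes decreasing rearrangements on $[0,m]$ and measure-preserving transformations available, and lets me split any set of positive measure into pieces of every smaller prescribed measure. Write $h^{\Delta}$ for the nonincreasing rearrangement of a function $h$ on $[0,m]$. Also, throughout, $\hat{f}$ denotes the maximizer (written $f^{*}$ in the statement).

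The starting point is the Hardy--Littlewood inequality: for every $f\in\mathcal{R}_{f_0}$ one has $L(f)=\int_{X}fg\,d\mu\le\int_{0}^{m}f_0^{\Delta}(s)\,g^{\Delta}(s)\,ds=:M$, using $f^{\Delta}=f_0^{\Delta}$. Next I would invoke the standard fact that on a nonatomic finite measure space there is a measure-preserving map $\tau\colon X\to[0,m]$ with $g=g^{\Delta}\circ\tau$ $\mu$-a.e. Then $f_0^{\Delta}\circ\tau\in\mathcal{R}_{f_0}$ and, since $\tau$ is measure-preserving, $L(f_0^{\Delta}\circ\tau)=\int_{X}(f_0^{\Delta}\circ\tau)(g^{\Delta}\circ\tau)\,d\mu=\int_{0}^{m}f_0^{\Delta}g^{\Delta}\,ds=M$; hence $f_0^{\Delta}\circ\tau$ is a maximizer of $L$ over $\mathcal{R}_{f_0}$. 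By the uniqueness hypothesis, $\hat{f}=f_0^{\Delta}\circ\tau$ $\mu$-a.e., and, crucially, this holds for \emph{every} measure-preserving $\tau$ with $g=g^{\Delta}\circ\tau$.

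Now I would examine where $\tau$ is pinned down. Off the flat intervals of $g^{\Delta}$ the function $g^{\Delta}$ is essentially strictly decreasing, so there $\tau=(g^{\Delta})^{-1}\circ g$ is forced $\mu$-a.e.; on a maximal interval $I$ where $g^{\Delta}\equiv c$ — that is, on $\{g=c\}$ with $\mu(\{g=c\})>0$ — one may compose $\tau$ with any measure-preserving bijection of $I$ without disturbing $g=g^{\Delta}\circ\tau$. Since $\hat{f}=f_0^{\Delta}\circ\tau$ must be independent of these choices, and since on a nonatomic space a non-constant function on $I$ is genuinely moved by some measure-preserving bijection of $I$, it follows that $f_0^{\Delta}$ is constant on each such interval $I$; in particular $\hat{f}$ is $\mu$-a.e. constant on every level set of $g$ of positive measure. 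I would then set $\phi:=f_0^{\Delta}\circ(g^{\Delta})^{-1}$, choosing any admissible value of the generalized inverse on the (at most countably many) flat intervals of $g^{\Delta}$ — which is irrelevant precisely because $f_0^{\Delta}$ is constant there. This $\phi$ is nondecreasing, being a composition of two nonincreasing maps, and $\phi\circ g=\hat{f}$ $\mu$-a.e., since $(g^{\Delta})^{-1}\circ g=\tau$ off the flat pieces while $f_0^{\Delta}$ is constant on them; extending $\phi$ monotonically to all of $\mathbb{R}$ gives the statement.

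The hard part will be the third paragraph: converting the freedom in $\tau$ on the flat pieces of $g^{\Delta}$ into the rigidity of $f_0^{\Delta}$ there, and checking that the resulting $\phi$ is genuinely increasing and genuinely satisfies $\hat{f}=\phi\circ g$ up to a null set. An equivalent route that avoids decreasing rearrangements is a two-set ``swap'' argument: any maximizer $\hat{f}$ must be similarly ordered with $g$, because if for some $\alpha<\beta$ and $\gamma<\delta$ both $\{\hat{f}\le\alpha,\ g\ge\delta\}$ and $\{\hat{f}\ge\beta,\ g\le\gamma\}$ had positive measure, then exchanging the values of $\hat{f}$ on equal-measure subsets of these two sets (via a measure-preserving bijection, which keeps us in $\mathcal{R}_{f_0}$) would strictly increase $L$, a contradiction; then uniqueness, applied on the positive-measure level sets of $g$ as above, upgrades ``similarly ordered'' to $\hat{f}=\phi\circ g$. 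Either way, these last real-analysis steps are the substance of the proof; the rest is the Hardy--Littlewood inequality and the existence of the parametrization $\tau$.
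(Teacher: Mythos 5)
This theorem is stated in the paper with the tag ``See \cite{b1}'' and is given no proof there: it is Burton's result, imported as a black box (the $f^*$ in the statement is a slip for $\hat{f}$, and ``increasing'' must be read as non-decreasing, which is what your construction produces and is all that can hold in general, e.g.\ when $f_0$ is a characteristic function). So there is no in-paper argument to compare against; the question is only whether your reconstruction stands on its own, and it does. The skeleton --- reduce to $X=[0,m]$ by the isomorphism theorem for separable nonatomic measure algebras, bound $L$ by $\int_0^m f_0^{\Delta}g^{\Delta}\,ds$ via Hardy--Littlewood, attain the bound with $f_0^{\Delta}\circ\tau$ for a measure-preserving $\tau$ satisfying $g=g^{\Delta}\circ\tau$, and then use uniqueness of the maximizer to force $f_0^{\Delta}$ to be a.e.\ constant on each flat interval of $g^{\Delta}$ --- is correct, and your handling of the crux is the right one: composing $\tau$ with a measure-preserving bijection $\rho$ of a flat interval $I$ leaves $g^{\Delta}\circ\tau$ unchanged, so uniqueness gives $f_0^{\Delta}\circ\rho\circ\tau=f_0^{\Delta}\circ\tau$ $\mu$-a.e., and pushing forward through the measure-preserving $\tau$ yields $f_0^{\Delta}\circ\rho=f_0^{\Delta}$ a.e.\ on $I$, hence constancy. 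Two items should be named rather than called standard: the existence of the parametrization $\tau$ is Ryff's theorem (this is exactly where nonatomicity and separability enter and where a citation is needed), and the well-definedness of $\phi=f_0^{\Delta}\circ(g^{\Delta})^{-1}$ requires observing that the ambiguity of the generalized inverse lives on the countably many flat intervals, where $f_0^{\Delta}$ is constant, plus a null set whose $g$-preimage is $\mu$-null. Your alternative two-set swap argument is in fact closer to Burton's own proof, which shows the joint distribution of $(\hat{f},g)$ is carried by a monotone subset of $\mathbb{R}^2$ and then invokes uniqueness to resolve the vertical segments; either route is legitimate.
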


\section{Derivate with respect to the load}

Now we compute the derivate of the functional $\mathcal{J}(\hat{f})$ with respect to perturbations in $\hat{f}.$ We will consider regular perturbations and asume that the function $\hat{f}$ has bounded variation in $\partial\Omega.$

We begin by describing the kind of variations that we are considering. Let $V$ be a regular (smooth) vector field, globally 
Lipschitz, with support in a neighborhood of $\partial\O$ such that $\langle V,\nu \rangle=0$
and let $\psi_t:\rn\to\rn$ be defined as the unique solution to
\begin{equation}\label{moreno}
\begin{cases}
\frac{\rd}{\rd t}\psi_t (x)=V(\psi_t(x)) & t>0,\\
\psi_0(x)= x & x\in \rn.
\end{cases}
\end{equation}

We have
$$
\psi_t(x)=x+tV(x)+ o(t) \quad \forall x\in\rn.
$$

Thus, if $f\in \mathcal{R}_{f_0},$ we define $f_t =f\circ\psi_t^{-1}.$ Now, let
$$
I(t):= {\mathcal J}(f_t) = \int_{\partial\Omega}u_t f_t \textrm{d}\mathcal{H}^{N-1}
$$
where $u_t\in W^{1,p}(\Omega)$ is the unique solution to
\begin{equation}\label{balbo}
\begin{cases}
-\Delta_p u_t + |u_t|^{p-2} = 0 & \textrm{in }\Omega,\\
|\nabla u_t|^{p-2}\frac{\partial u_t}{\partial \nu}=f_t & \textrm{on } \partial\Omega.
\end{cases}
\end{equation}

\begin{lem}\label{con1} Given $f\in L^q(\partial\Omega)$ then
\begin{eqnarray*}
f_t=f\circ\psi_t^{-1} &\to& f \textrm{ in } L^{q}(\partial\Omega), \textrm{ as } t\to 0.
\end{eqnarray*}
\end{lem}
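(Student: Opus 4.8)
The plan is to reduce the convergence $f_t \to f$ in $L^q(\partial\Omega)$ to two standard facts: (i) the flow maps $\psi_t$ converge to the identity uniformly on $\partial\Omega$ in $C^1$ as $t\to 0$, with uniformly bounded Jacobians (of the restriction to $\partial\Omega$), so that the change-of-variables formula on the hypersurface $\partial\Omega$ produces only a bounded, converging-to-one weight; and (ii) continuity of translation in $L^q$, applied on $\partial\Omega$. First I would record that since $V$ is globally Lipschitz with $\langle V,\nu\rangle = 0$, the flow $\psi_t$ leaves $\partial\Omega$ invariant, so $f_t = f\circ\psi_t^{-1}$ is a well-defined function on $\partial\Omega$, and $\psi_t|_{\partial\Omega}$ is a bi-Lipschitz diffeomorphism of $\partial\Omega$ with $\psi_t = \mathrm{id} + tV + o(t)$ and tangential Jacobian $J_t^{\partial\Omega} = 1 + O(t)$, uniformly on $\partial\Omega$, so in particular $J_t^{\partial\Omega}$ and its inverse are bounded by a constant independent of $t$ for $t$ small.

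Next I would first prove the lemma for $f \in C(\partial\Omega)$ (or $f$ Lipschitz), where it is immediate: $f_t \to f$ uniformly on $\partial\Omega$ because $\psi_t^{-1} \to \mathrm{id}$ uniformly and $f$ is uniformly continuous, hence $f_t \to f$ in $L^q(\partial\Omega)$ since $\H^{N-1}(\partial\Omega) < \infty$. Then I would pass to general $f \in L^q(\partial\Omega)$ by density: given $\varepsilon > 0$, pick $g \in C(\partial\Omega)$ with $\|f - g\|_{L^q(\partial\Omega)} < \varepsilon$, and estimate
$$
\|f_t - f\|_{L^q(\partial\Omega)} \le \|f_t - g_t\|_{L^q(\partial\Omega)} + \|g_t - g\|_{L^q(\partial\Omega)} + \|g - f\|_{L^q(\partial\Omega)}.
$$
The middle term is controlled by the continuous case. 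For the first term, the change of variables $x = \psi_t(y)$ on $\partial\Omega$ gives
$$
\|f_t - g_t\|_{L^q(\partial\Omega)}^q = \int_{\partial\Omega} |f - g|^q(y)\, J_t^{\partial\Omega}(y)\, \rd\H^{N-1}(y) \le C \|f - g\|_{L^q(\partial\Omega)}^q \le C\varepsilon^q,
$$
using the uniform bound on $J_t^{\partial\Omega}$; the third term is $< \varepsilon^q$ likewise. Letting $t \to 0$ and then $\varepsilon \to 0$ finishes the proof.

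The only genuinely delicate point is (i): justifying that the restriction of $\psi_t$ to $\partial\Omega$ is, for small $t$, a diffeomorphism of $\partial\Omega$ onto itself whose tangential Jacobian tends to $1$ uniformly. This follows from the invariance $\psi_t(\partial\Omega) = \partial\Omega$ (a consequence of $\langle V,\nu\rangle = 0$ together with uniqueness for the ODE \eqref{moreno}) and from smooth dependence of solutions of \eqref{moreno} on $t$ and initial data, which gives $D\psi_t \to \mathrm{Id}$ uniformly on the compact set $\partial\Omega$; the tangential Jacobian is then a continuous function of $D\psi_t$ restricted to the tangent spaces, so it converges to $1$ uniformly and stays bounded away from $0$ and $\infty$ for $t$ in a neighborhood of $0$. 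Everything else — the density argument and the triangle inequality — is routine.
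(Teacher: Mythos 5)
Your proposal is correct and follows essentially the same route as the paper's proof: approximate $f$ by a smooth (resp.\ continuous) function $g$, control $\|f_t-g_t\|_{L^q(\partial\Omega)}$ by the change-of-variables formula with the tangential Jacobian $J_\tau\psi_t=1+t\,\mathrm{div}_\tau V+o(t)$ uniformly bounded, handle $\|g_t-g\|_{L^q(\partial\Omega)}$ by uniform convergence of $\psi_t^{-1}$ to the identity, and conclude with the triangle inequality. The only difference is cosmetic (the paper takes $g\in C_c^\infty(\partial\Omega)$ and invokes $C^1$ convergence of $g_t$, while you use continuity and uniform convergence), plus your added justification of the invariance of $\partial\Omega$ under the flow, which the paper leaves implicit.
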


\begin{proof}
Let $\varepsilon>0,$ and let  $g\in C^{\infty}_{c}(\partial\Omega)$ fixed such that $\|f-g\|_{L^q(\partial\Omega)}<\varepsilon.$ By the usual change of variables formula, we have,
$$
\|f_t-g_t\|_{L^q(\partial\Omega)}^q=\int_{\partial\Omega}|f-g|^qJ_{\tau}\psi_t \rd\H^{N-1},
$$
where $g_t=g\circ \psi_t^{-1}$ and $J\psi$ is the tangential Jacobian of $\psi$. We also know that
$$
J_{\tau}\psi:=1+t \mbox{ div}_\tau V + o(t).
$$
Here $\textrm{div}_\tau V$ is the tangential divergence of $V$ over $\partial\Omega.$ Then
$$
\|f_t-g_t\|_{L^q(\partial\Omega)}^q=\int_{\partial\Omega}|f-g|^q(1 + t \mbox{ div}_\tau V +o(t))\,\rd\H^{N-1}.
$$
Then, there exist $t_1>0$ and such that if $0<t<t_1$ then
$$
\|f_t-g_t\|_{L^q(\partial\Omega)}\le C\varepsilon.
$$
where $C$ is a constant independent of $t.$ Moreover, since  $\psi_t^{-1}\to Id$ in the $C^1$ topology when $t\to0$ then 
	$g_t=g\circ\psi^{-1}_{t}\to g$ in the $C^1$ topology and therefore there exists	$t_2>0$  such that if $0<t<t_2$ then 
	$$\|g_t-g\|_{L^q(\partial\Omega)}<\varepsilon.$$
	Finally, we have for all $0<t<t_0=\min\{t_1,t_2\}$ then
	\begin{align*}
	\|f_t-f\|_{L^q(\partial\Omega)}&\le \|f_t-g_t\|_{L^q(\partial\Omega)}+\|g_t-g\|_{L^q(\partial\Omega)}
	+\|g-f\|_{L^q(\partial\Omega)}\\
	&\le C\varepsilon
        \end{align*}
        where $C$ is a constant independent of $t.$
\end{proof}

\begin{lem}\label{con2}
Let $u_0$ and $u_t$ be the solution of \eqref{balbo} with $t=0$ and $t>0$,
respectively. Then
       $$ u_t \to u_0 \textrm{ in } W^{1,p}(\Omega), \textrm{ as } t\to 0^+.$$
\end{lem}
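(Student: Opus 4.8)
The plan is to prove the continuous dependence $u_t\to u_0$ in $\wp$ by a standard energy/compactness argument, using Lemma \ref{con1} as the starting point. First I would write down the weak formulation of \eqref{balbo}: for every $\varphi\in\wp$,
$$
\int_\O |\nabla u_t|^{p-2}\nabla u_t\cdot\nabla\varphi + |u_t|^{p-2}u_t\varphi\,\rd\H^N = \int_{\partial\O} f_t\varphi\,\rd\H^{N-1}.
$$
Taking $\varphi=u_t$ gives $\|u_t\|_{\wp}^p = \int_{\partial\O} f_t u_t\,\rd\H^{N-1}$, and by the trace inequality (the embedding $\wp\hookrightarrow L^{q'}(\partial\O)$, valid since $q>p'/N'$ makes $q'$ subcritical) together with Young's inequality, $\|u_t\|_{\wp}^p \le \|f_t\|_{\lqb}\|u_t\|_{L^{q'}(\partial\O)} \le C\|f_t\|_{\lqb}\|u_t\|_{\wp}$. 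Since $\|f_t\|_{\lqb}\to\|f\|_{\lqb}$ by Lemma \ref{con1}, the family $\{u_t\}$ is bounded in $\wp$ for $t$ small.

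Next I would extract a weakly convergent subsequence $u_{t_k}\rightharpoonup v$ in $\wp$, which by compactness of the trace embedding also converges strongly in $L^{q'}(\partial\O)$ and (along a further subsequence) in $L^p(\O)$ and a.e. Passing to the limit in the weak formulation — using $f_{t_k}\to f=f_0$'s representative... actually $f_t\to f$ in $\lqb$ from Lemma \ref{con1}, and the strong $L^{q'}(\partial\O)$ convergence of the test function traces — one checks that $v$ is a weak solution of \eqref{balbo} with $t=0$; by uniqueness $v=u_0$. To upgrade weak convergence to strong convergence in $\wp$, I would use the standard trick: test the equation for $u_{t_k}$ with $u_{t_k}-u_0$ and the equation for $u_0$ with $u_{t_k}-u_0$, subtract, and obtain
$$
\int_\O \big(|\nabla u_{t_k}|^{p-2}\nabla u_{t_k} - |\nabla u_0|^{p-2}\nabla u_0\big)\cdot\nabla(u_{t_k}-u_0)\,\rd\H^N + (\text{analogous lower-order term}) = \int_{\partial\O}(f_{t_k}-f_0)(u_{t_k}-u_0)\,\rd\H^{N-1}.
$$
The right-hand side tends to $0$ since $f_{t_k}-f_0\to0$ in $\lqb$ and $u_{t_k}-u_0$ is bounded in $L^{q'}(\partial\O)$. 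By the well-known monotonicity inequalities for the $p$-Laplacian (Simon's inequalities) the left-hand side controls $\|\nabla(u_{t_k}-u_0)\|_{L^p}$ and $\|u_{t_k}-u_0\|_{L^p}$ from below (the precise form depends on whether $p\ge2$ or $1<p<2$, requiring in the latter case an interpolation with the already-established boundedness), whence $u_{t_k}\to u_0$ strongly in $\wp$.

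Finally, since the limit $u_0$ is the same for every subsequence, the whole family converges: $u_t\to u_0$ in $\wp$ as $t\to0^+$. The main obstacle is the passage from weak to strong convergence in the $1<p<2$ regime, where Simon's inequality only gives $\|\nabla(u_{t_k}-u_0)\|_{L^p}^2 \le C\big(\int (|\nabla u_{t_k}|^{p-2}\nabla u_{t_k}-|\nabla u_0|^{p-2}\nabla u_0)\cdot\nabla(u_{t_k}-u_0)\big)\big(\|\nabla u_{t_k}\|_{L^p}+\|\nabla u_0\|_{L^p}\big)^{2-p}$; one needs the uniform bound from the first step to close this. Everything else is routine: the energy bound, the compactness of the trace, and the identification of the limit via uniqueness of the weak solution to \eqref{balbo}.
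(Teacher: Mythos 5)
Your argument is correct and is essentially the proof the paper invokes by reference (it cites Lemma 4.2 of \cite{cuccu}, noting only that the trace inequality replaces the Poincar\'e inequality): a uniform energy bound from testing with $u_t$, followed by the monotonicity/Simon-type inequalities applied to the difference of the two weak formulations, with Lemma \ref{con1} killing the boundary term. One remark: your intermediate step of extracting a weak limit $v$ and ``passing to the limit in the weak formulation'' is the only non-routine point as stated (identifying the weak limit of $|\nabla u_{t_k}|^{p-2}\nabla u_{t_k}$ would require Minty's trick), but it is also entirely redundant --- your subtraction argument tests directly against the known solution $u_0$, so it yields strong convergence of the full family without any subsequence extraction or limit identification.
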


\begin{proof}
The proof follows exactly as the one in Lemma 4.2 in \cite{cuccu}. The only difference being that we use the trace inequality instead of the Poincar\'e inequality.
\end{proof}

\begin{re}\label{remark.conver}
It is easy to see that, as $\psi_t\to Id$ in the $C^1$ topology, then from Lemma \ref{con2} it follows that
$$
w_t:=u_t\circ\psi_t\to u_0 \quad \textrm{strongly in } W^{1,p}(\Omega).$$
\end{re}

With these preliminaries, the following theorem follows exactly as Theorem 5.5 of \cite{nos}.

\begin{te}\label{derivada}
With the previous notation, we have that $I(t)$ is differentiable at $t=0$ and
\begin{align*}
\frac{\textrm{d}I(t)}{\textrm{d}t}\Big|_{t=0} 
& =  \frac{1}{p-1} \bigg\{p\int_{\partial\O} u_0 f \mbox{ div}_\tau V\, \rd\H^{N-1} + p \int_\O |\nabla u_0|^{p-2}\langle\nabla u_0, ^T V' \nabla u_0^T\rangle\, \rd \H^N\\ 
& \quad - \int_\O (|\nabla u_0|^p + |u_0|^p) \mbox{ div}\, V \,
\rd\H^N\bigg\}.
\end{align*}
where $u_0$ is the solution of \eqref{balbo} with $t=0.$
\end{te}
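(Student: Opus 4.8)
The plan is to compute $I(t)$ via the variational characterization \eqref{rojas}, differentiate under the change of variables that turns the moving domain back into a fixed one, and then use the convergence results of Lemmas \ref{con1}, \ref{con2} and Remark \ref{remark.conver} to pass to the limit. The key point is that the supremum in \eqref{rojas} is attained at $u_t$, so we have two-sided estimates: $I(t) = \mathcal{I}_t(u_t) \geq \mathcal{I}_t(v)$ for every competitor $v \in W^{1,p}(\Omega)$, where $\mathcal{I}_t$ denotes the functional with load $f_t$. Choosing $v = u_0 \circ \psi_t^{-1}$ in the estimate for $I(t)$ and $v = u_t \circ \psi_t = w_t$ in the estimate for $I(0)$ gives upper and lower bounds on the difference quotient $(I(t)-I(0))/t$ that both converge, as $t \to 0^+$, to the same quantity.

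\textbf{Step 1: Rewrite $I(t)$ on the fixed domain.} Using the change of variables $y = \psi_t(x)$, with tangential Jacobian $J_\tau\psi_t = 1 + t\,\mbox{div}_\tau V + o(t)$ on $\partial\Omega$ and Jacobian $J\psi_t = 1 + t\,\mbox{div}\,V + o(t)$ in $\Omega$, and recalling $Df_t = f$ and $\nabla(u_t\circ\psi_t) = (\nabla u_t \circ \psi_t)\,\psi_t'$ with $\psi_t' = \mathrm{Id} + t V' + o(t)$, I would express
$$
I(t) = \frac{1}{p-1}\left\{ p\int_{\partial\Omega} f\,w_t\,(1 + t\,\mbox{div}_\tau V)\,\rd\H^{N-1} - \int_\Omega \big(|\psi_t'^{-1,T}\nabla w_t|^p + |w_t|^p\big)(1 + t\,\mbox{div}\,V)\,\rd\H^N \right\} + o(t),
$$
where $w_t = u_t\circ\psi_t$.

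\textbf{Step 2: Two-sided comparison.} Since $w_t$ maximizes the pulled-back functional and $u_0$ maximizes $\mathcal{I}_0$, I would insert $u_0$ as a competitor in the expression for $I(t)$ (after pushing forward, i.e. comparing $\mathcal{I}_t(u_t)$ with $\mathcal{I}_t(u_0\circ\psi_t^{-1})$) to get $I(t) \geq \Phi(t, u_0)$, and insert $w_t$ as a competitor in $I(0) = \mathcal{I}_0(u_0) \geq \mathcal{I}_0(w_t)$; subtracting yields
$$
\Phi(t,u_0) - \mathcal{I}_0(u_0) \leq I(t) - I(0) \leq \mathcal{I}_t(u_t) - \mathcal{I}_0(w_t).
$$
Dividing by $t > 0$, the left-hand side is an explicit polynomial-in-$t$ expression (with coefficients evaluated at the fixed $u_0$) whose $t$-derivative at $0$ is precisely the claimed formula, using $\frac{d}{dt}|_{t=0}|\psi_t'^{-1,T}\nabla u_0|^p = -p|\nabla u_0|^{p-2}\langle \nabla u_0, {}^TV'\nabla u_0^T\rangle$. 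For the right-hand side, I would expand $\mathcal{I}_t(u_t) - \mathcal{I}_0(w_t)$ keeping only the terms coming from the $t$-dependent weights $J\psi_t$, $J_\tau\psi_t$ and $\psi_t'$, and then invoke $w_t \to u_0$ strongly in $W^{1,p}(\Omega)$ (Remark \ref{remark.conver}) together with $f_t \to f$ in $L^q(\partial\Omega)$ (Lemma \ref{con1}) to see that $\frac{1}{t}(\mathcal{I}_t(u_t) - \mathcal{I}_0(w_t))$ converges to the same limit as $t\to 0^+$. A squeeze argument then gives differentiability from the right with the stated value; since $V$ may be replaced by $-V$, the same argument yields the left derivative, so $I$ is differentiable at $0$.

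\textbf{The main obstacle} I anticipate is the careful bookkeeping in Step 2: one must show that the error terms in the upper-bound expansion — e.g.\ the difference between $|\psi_t'^{-1,T}\nabla w_t|^p$ expanded to first order and its actual value — are $o(t)$ \emph{after dividing by $t$ and integrating}, which requires uniform (in $t$) $W^{1,p}$ bounds on $w_t$ plus the strong convergence $w_t \to u_0$ to upgrade the pointwise Taylor expansion of the smooth matrix-valued map $M \mapsto |M^{-1,T}\xi|^p$ to an $L^1$ statement. This is exactly where the hypotheses that $V$ is globally Lipschitz and that $f_0$ (hence $\hat f$, in the application) has bounded variation enter, controlling the boundary term $\int_{\partial\Omega} f\, w_t\,\mbox{div}_\tau V\,\rd\H^{N-1}$ and its convergence. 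Apart from this, the computation is a routine first-variation argument, and, as the authors note, it runs parallel to Theorem 5.5 of \cite{nos}.
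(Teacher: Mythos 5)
Your proposal is correct and follows essentially the same route as the paper, which simply defers to Theorem 5.5 of \cite{nos}: there, too, the derivative is obtained by pulling the functional back to the fixed configuration via $\psi_t$, sandwiching the difference quotient between the two competitor estimates $\mathcal{I}_t(u_0\circ\psi_t^{-1})$ and $\mathcal{I}_0(u_t\circ\psi_t)$, and passing to the limit with Lemmas \ref{con1}, \ref{con2} and Remark \ref{remark.conver}. The only slight misattribution is that the $BV$ hypothesis on $f$ is not needed for this squeeze argument; it enters only in the subsequent theorem, where $\int_{\partial\Omega}\mbox{div}_\tau(u_0V)f\,\rd\H^{N-1}$ is rewritten as $\int_{\partial\Omega}u_0V\,\rd[Df]$.
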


\begin{proof}
For the details see the proof of Theorem 5.5 of \cite{nos}.
\end{proof}

Now we try to find a more explicit formula for $I'(0)$. For This, we consider $f\in L^q(\partial\Omega)\cap 
BV(\partial\Omega)$, where $BV(\partial\Omega)$ is the space of functions of bounded variation. For details and properties of BV functions we refer to the book \cite{E-G}.

\begin{te}
If $f\in L^q(\partial \Omega)\cap BV(\partial\Omega),$ we have that
$$
\frac{\partial I(t)}{\partial t}\Big|_{t=0}=\frac{p}{p-1}\int_{\partial\Omega}u_0 V \,\rd[Df].
$$
where $u_0$ is the solution of \eqref{balbo} with $t=0.$
\end{te}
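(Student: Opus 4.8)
The plan is to start from the formula for $I'(0)$ furnished by Theorem~\ref{derivada} and to replace its two volume integrals by a boundary integral by means of a Rellich--Pohozaev type identity. To obtain such an identity I would test the equation $-\Delta_p u_0+|u_0|^{p-2}u_0=0$ against $\varphi:=\langle\nabla u_0,V\rangle$ and integrate over $\O$. Integrating the $p$-Laplacian term by parts produces the boundary contribution $\int_{\partial\O}|\nabla u_0|^{p-2}\frac{\partial u_0}{\partial\nu}\langle\nabla u_0,V\rangle\,\rd\H^{N-1}=\int_{\partial\O}f\,\langle\nabla_\tau u_0,V\rangle\,\rd\H^{N-1}$ (using the Neumann condition in \eqref{balbo} and that $\langle V,\nu\rangle=0$ makes $\langle\nabla u_0,V\rangle=\langle\nabla_\tau u_0,V\rangle$ on $\partial\O$) together with the interior term $\int_\O|\nabla u_0|^{p-2}\langle\nabla u_0,\nabla\varphi\rangle\,\rd\H^N$; since $\nabla\varphi=D^2u_0\,V+{}^TV'\nabla u_0$ and $D^2u_0$ is symmetric, this interior term equals $\int_\O\frac1p\langle\nabla(|\nabla u_0|^p),V\rangle\,\rd\H^N+\int_\O|\nabla u_0|^{p-2}\langle\nabla u_0,{}^TV'\nabla u_0\rangle\,\rd\H^N$, while the zero-order term of the equation contributes $\int_\O|u_0|^{p-2}u_0\,\varphi\,\rd\H^N=\int_\O\frac1p\langle\nabla(|u_0|^p),V\rangle\,\rd\H^N$. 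Integrating the two terms $\langle\nabla(|\nabla u_0|^p),V\rangle$ and $\langle\nabla(|u_0|^p),V\rangle$ by parts once more and using again $\langle V,\nu\rangle=0$ to discard the boundary terms, one arrives at
$$
p\int_\O|\nabla u_0|^{p-2}\langle\nabla u_0,{}^TV'\nabla u_0\rangle\,\rd\H^N-\int_\O(|\nabla u_0|^p+|u_0|^p)\,\mbox{div}\,V\,\rd\H^N=p\int_{\partial\O}f\,\langle\nabla_\tau u_0,V\rangle\,\rd\H^{N-1}.
$$

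Inserting this identity into the formula of Theorem~\ref{derivada}, the two volume integrals get absorbed and what remains is
$$
\frac{\rd I(t)}{\rd t}\Big|_{t=0}=\frac{p}{p-1}\int_{\partial\O}f\big(u_0\,\mbox{div}_\tau V+\langle\nabla_\tau u_0,V\rangle\big)\,\rd\H^{N-1}=\frac{p}{p-1}\int_{\partial\O}f\,\mbox{div}_\tau(u_0V)\,\rd\H^{N-1},
$$
because $u_0V$ is tangential and $\mbox{div}_\tau(u_0V)=u_0\,\mbox{div}_\tau V+\langle\nabla_\tau u_0,V\rangle$. Finally, as $f\in BV(\partial\O)$ and $u_0V$ is a Lipschitz (indeed $C^1$, by elliptic regularity) tangential vector field on the compact manifold without boundary $\partial\O$, the Gauss--Green formula for functions of bounded variation (see \cite{E-G}) gives $\int_{\partial\O}f\,\mbox{div}_\tau(u_0V)\,\rd\H^{N-1}=\int_{\partial\O}u_0V\,\rd[Df]$, up to the orientation convention for the vector measure $Df$, which is the asserted formula.

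The delicate step is the Rellich--Pohozaev identity: weak solutions of the $p$-Laplace equation are in general only of class $C^{1,\alpha}$ up to the boundary, so $D^2u_0$ is not available pointwise, $\varphi=\langle\nabla u_0,V\rangle$ need not belong to $W^{1,p}(\O)$, and the manipulations involving $\nabla(|\nabla u_0|^p)$ require justification. I would handle this by approximation: replace \eqref{balbo} by a nondegenerate regularization (say with coefficient $(\varepsilon+|\nabla u|^2)^{(p-2)/2}$), whose solutions $u_\varepsilon$ are smooth, prove the identity for $u_\varepsilon$, and let $\varepsilon\to0$ using $u_\varepsilon\to u_0$ in $W^{1,p}(\O)$ and in $C^1(\bar\O)$. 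Alternatively one can avoid the identity altogether by transporting the dual functional $\mathcal I$ of \eqref{rojas} along $\psi_t$ and differentiating through the supremum---an envelope argument, legitimate because $u_0$ is a critical point of the unperturbed functional and because of Lemma~\ref{con2} and Remark~\ref{remark.conver}---which produces $I'(0)=\frac{p}{p-1}\int_{\partial\O}f\,\mbox{div}_\tau(u_0V)\,\rd\H^{N-1}$ directly, after which the $BV$ integration by parts finishes the proof. One should also note that the pairing $\int_{\partial\O}u_0V\,\rd[Df]$ requires $u_0$ to be at least continuous on $\partial\O$, which is again a consequence of elliptic regularity.
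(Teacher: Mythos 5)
Your proposal is correct and follows essentially the same route as the paper: both start from Theorem \ref{derivada}, absorb the two volume integrals by testing the weak formulation with $\langle\nabla u_0,V\rangle$ together with the divergence identities for $|\nabla u_0|^pV$ and $|u_0|^pV$ (using $\langle V,\nu\rangle=0$ to kill the boundary terms), arrive at $\frac{p}{p-1}\int_{\partial\O}f\,\mathrm{div}_\tau(u_0V)\,\rd\H^{N-1}$, and finish with the $BV$ integration by parts; the regularity obstruction is handled in both cases by the same $(\varepsilon^2+|\nabla u|^2)^{(p-2)/2}$ regularization. The only differences are in ordering (you package the volume-integral manipulation as a separate Rellich--Pohozaev identity before substituting, whereas the paper manipulates the formula in place) and your explicit flagging of the sign convention in $\int_{\partial\O}u_0V\,\rd[Df]$, which the paper glosses over.
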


\begin{proof}
In the course of the computations, we require the solution $u_0$ to
$$
\begin{cases}
-\Delta u_0 + |u_0|^{p-2} u_0 = 0 & \mbox{in }\Omega,\\
|\nabla u_0|^{p-2}\frac{\partial u_0}{\partial \nu} = f & \mbox{on }
\partial\Omega,
\end{cases}
$$
to be $C^2$. However, this is not true. As it is well known (see, for instance,
\cite{16}), $u_0$ belongs to the class $C^{1,\delta}$ for some $0<\delta<1$.

In order to overcome this difficulty, we proceed as follows. We consider the
regularized problems
\begin{equation}\label{regularized}
\begin{cases}
-\mbox{div}( (|\nabla u_0^\varepsilon|^2 + \varepsilon^2)^{(p-2)/2}\nabla u^\varepsilon_0) + |u^\varepsilon_0|^{p-2} 
u^\varepsilon_0 = 0 & \mbox{in }\Omega,\\
(|\nabla u^\varepsilon_0|^2 + \varepsilon^2)^{(p-2)/2}\frac{\partial u^\varepsilon_0}{\partial \nu} =
f & \mbox{on }\partial\Omega.
\end{cases}
\end{equation}
It is well known that the solution $u_0^\varepsilon$ to \eqref{regularized} is of class
$C^{2,\rho}$ for some $0<\rho<1$ (see \cite{LSU}).

Then, we can perform all of our computations with the functions $u_0^\varepsilon$ and pass to the limit as $\varepsilon\to 0+$ at the end.

We have chosen to work formally with the function $u_0$ in order to make our arguments more transparent and leave the details to the reader. For a similar approach, see \cite{GM}.

Now, by Theorem \ref{derivada} and since
\begin{eqnarray*}
\mbox{div}(|u_0|^pV) & = & p|u_0|^{p-2}u_0\langle\nabla u_0, V\rangle + |u_0|^p \mbox{ div}\, V,\\
\mbox{div}(|\nabla u_0|^pV)&=&p|\nabla u_0|^{p-2}\langle\nabla u_0 D^2 u_0, V\rangle + |\nabla u_0|^p \mbox{ div}\, V,
\end{eqnarray*}
we obtain
\begin{align*}
I'(0) = & \frac{1}{p-1} \bigg\{p\int_{\partial\O} u_0 f \mbox{ div}_\tau V\,
\rd\H^{N-1} + p \int_\O |\nabla u_0|^{p-2}\langle\nabla u_0, ^T V' \nabla u_0^T\rangle\, \rd \H^N\\
& - \int_\O (|\nabla u_0|^p + |u_0|^p) \mbox{ div}V \, \rd\H^N\\
= & \frac{1}{p-1} \bigg\{p\int_{\partial\O} u_0 f \mbox{ div}_\tau
V\,\rd\H^{N-1} + p\int_\O |\nabla u_0|^{p-2}\langle\nabla u_0, ^T V' \nabla u_0^T\rangle\, \rd \H^N\\
& -\int_\O \mbox{div}((|\nabla u_0|^p + |u_0|^p)V)\, \rd\H^N
+ p\int_\O |\nabla u_0|^{p-2}\langle \nabla u_0 D^2 u_0, V\rangle\, \rd\H^N\\
&  + p\int_\O |u_0|^{p-2}u_0 \langle \nabla u_0, V\rangle \, \rd \H^N\bigg\}.
\end{align*}
Hence, using that $\langle V, \nu\rangle=0$ in the right hand side of the above
equality we find
\begin{eqnarray*}
I'(0) & = & \frac{p}{p-1}\bigg\{\int_{\partial\O} u_0 f \mbox{ div}_\tau V\, \rd\H^{N-1}\\
& & + \int_\O |\nabla u_0|^{p-2} \langle \nabla u_0, ^T V' \nabla u_0^T + D^2 u_0 V^T\rangle\, \rd \H^N\\
& & + \int_\O |u_0|^{p-2}u_0 \langle \nabla u_0, V\rangle \, \rd \H^N\bigg\}\\
& = & \frac{p}{p-1} \bigg\{\int_{\partial\O} u_0 f \mbox{ div}_\tau V\,
\rd\H^{N-1} + \int_\O |\nabla u_0|^{p-2} \langle \nabla u_0, \nabla(\langle\nabla u_0, V\rangle)\rangle\, \rd \H^N\\
& & + \int_\O |u_0|^{p-2}u_0 \langle \nabla u_0, V\rangle \, \rd \H^N\bigg\}.
\end{eqnarray*}
Since $u_0$ is a week solution of \eqref{balbo} with $t=0$ we have
\begin{eqnarray*}
I'(0) & = & \frac{p}{p-1}\bigg\{\int_{\partial\O} u_0  f \mbox{ div}_\tau
V\, \rd\H^{N-1} + \int_{\partial\O} \langle \nabla u_0, V\rangle f\, \rd\H^{N-1}\bigg\}\\
& = & \frac{p}{p-1} \int_{\partial\O} \mbox{div}_\tau(u_0 V) f\, \rd\H^{N-1}\\
\end{eqnarray*}
Finally, since $f\in BV(\partial\Omega)$ and $V\in C^1(\partial\Omega;\rn),$
\begin{eqnarray*}
I'(0)& = & \frac{p}{p-1} \int_{\partial\O} \mbox{div}_\tau(u_0 V) f\, \rd\H^{N-1}\\
&=&\frac{p}{p-1}\int_{\partial\Omega}u_0 V \,\rd[Df].
\end{eqnarray*}
The proof is now complete.
\end{proof}

\end{document}